\newtheorem{theorem}{Theorem}
\newtheorem{corollary}{Corollary}
\newtheorem{identity}{Identity}[theorem]
\newcommand*{\ADRn}{University of Sopron,  Institute of Mathematics, Hungary. \texttt{nemeth.laszlo@uni-sopron.hu}}
\newcommand*{\TIT}{Tilings of $(2\times2\times n)$-board with colored cubes and bricks}
\title{\bf \TIT}
\author{L\'aszl\'o N\'emeth\footnote{\ADRn}}
\date{}
\begin{document}
	 
\maketitle \thispagestyle{empty}

\begin{abstract}
Several articles deal with tilings with squares and dominoes on 2-dimensional boards, but only a few on boards in 3-dimensional space. We examine a tiling problem with colored cubes and bricks of $(2\times2\times n)$-board in three dimensions. After a short introduction and the definition of breakability we show  a way to get the number of the tilings of an $n$-long board considering the $(n-1)$-long board. It describes recursively the number of possible breakable and unbreakable tilings. Finally, we give some identities for the recursions using breakability. The method of determining the recursions in space can be useful in mathematical education as well.  \\[1mm]
 {\em Key Words: Tiling in space, recurrence sequence, combinatorial identity.}\\
 {\em MSC code:  Primary 05B45; Secondary 05A19, 11B37, 11B39, 52C22.} \\
  The final publication is available at International Journal of Mathematical Education in Science and Technology via https://www.tandfonline.com/toc/tmes20/current.
\end{abstract}




\section{Introduction}

Let $r_n$ be the number of the different tilings of a board with $(1\times 1)$-squares and $(1\times 2)$-dominoes.
It is known that the number of the tilings of a $(1\times n)$-board on the square mosaic is given by the Fibonacci numbers \cite{BQ,BPS}. In fact, $r_n=F_{n+1}$, where $(F_n)_{n=0}^\infty$ is the  Fibonacci sequence ({A000045} in the OEIS \cite{oeis}).

McQuistan and Lichtman \cite{ML} (generalizations by Kahkeshani \cite{Kah}) studied the number $r_n$ of the tilings of a $(2\times n)$-board, and they proved that $r_n$ satisfies the identity
\begin{equation}\label{eq:til_44}
r_n=3r_{n-1}+r_{n-2}-r_{n-3}
\end{equation}
for $n\geq3$ with initial values $r_0=1$, $r_1=2$, and $r_2=7$ ({A030186} in the OEIS \cite{oeis}). 

Benjamin and Quinn \cite{Ben} gave the generalized Fibonacci sequence $(u_n)_{n=0}^\infty$, where
\begin{equation} \label{eq:u}
u_{n}=a u_{n-1}+b u_{n-2},\quad (n\geq 2)
\end{equation}
with initial values $u_0=1$, $u_1=a$,
so that $u_n$ is interpreted as the number of ways to tile a $(1 \times n)$-board using $a$ colors of squares and $b$ colors of dominoes. Obviously, if $a=b=1$, then $u_n=F_{n+1}$. Belbachir and Belkhir \cite{Belb} proved some general combinatorial identities related to $u_n$.

Let $R_n$ be the number of tilings of a $(2\times n)$-board using $a$ colors of squares and $b$ colors of dominoes.  Katz and Stenson \cite{KS} showed the recurrence rule  
\begin{equation}\label{eq:tilcol_44}
R_n=(a^2+2b)R_{n-1}+a^2b\,R_{n-2}-b^3R_{n-3},\quad (n\geq3)
\end{equation}
with  $R_0=1$, $R_1=a^2+b$, and $R_2=a^4+4a^2b+2b^2$. 

Komatsu et~al.\ \cite{til_hyp} generalized the tilings of a $(2\times n)$-board for all regular squared mosaics with Schl\"afli's symbol $\{4,q\}$ ($q\geq4$). In  the case $q>4$, the mosaic is realized in the hyperbolic plane, and if $q=4$, then it is in the Euclidean plane. They provided that 
the sequence $(R_n)_{n=0}^{\infty}$ satisfies  the fourth-order linear homogeneous recurrence relation 
\begin{equation}\label{eq:main_color}
R_n= \alpha_q\,R_{n-1} +\beta_q\,R_{n-2}+ \gamma_q\,R_{n-3} -b^{2(q-2)} R_{n-4},\quad(n\geq4)
\end{equation}
where 
\begin{eqnarray*}
	\alpha_{q+2} &=& a \alpha_{q+1}+b \alpha_{q},\label{eqs:recur_alfa}\\
	\beta_{q+3} &=& (a^2+b)\beta_{q+2}+b(a^2+b)\beta_{q+1}-b^3\beta_{q},\label{eqs:recur_beta}\\
	\gamma_{q+2} &=& -ab\gamma_{q+1}+b^3\gamma_{q}, \label{eqs:recur_gamma}
\end{eqnarray*}
with initial values 
\begin{eqnarray*}
	& &	\alpha_4=a^2+b,\  \alpha_5=a(a^2+3b),\\
	& &	\beta_4=2b(a^2+b),\  \beta_5=b(a^2+b)(a^2+2b),\  \beta_6=b(a^6+6a^4b+10a^2b^2+2b^3),\\
	& &	\gamma_4=b^2(a^2-b),\  \gamma_5=-ab^3(a^2+b),
\end{eqnarray*}
moreover
$R_0=1$,
$R_1=u_{q-2}$,
$R_2=u_{q-2}^2+abu_{q-4}u_{q-3}+bu_{q-3}^2+b^2u_{q-4}^2$,
$R_3=(u_{q-2}^2+2abu_{q-4}u_{q-3}+2bu_{q-3}^2+2b^2u_{q-4}^2)u_{q-2}
+b^2(u_{q-3}u_{q-4}+(a^2+b)u_{q-4}u_{q-5}
+au_{q-4}^2)u_{q-3}
+ab^3u_{q-4}^2u_{q-5}$.

If $q=4$, then they showed that  \eqref{eq:main_color} returns with \eqref{eq:tilcol_44}.

\begin{figure}[!h]
	\centering
	\includegraphics{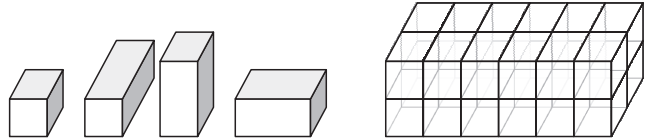} 
	\caption{Cube, bricks, and $(2\times 2\times 6)$-board}
	\label{fig:Cube_bricks}
\end{figure}

In this article, we examine the tilings of a $(2\times 2\times n)$-board with $(2\times 2\times 1)$-layers labeled $1, 2, \ldots, n$  on Euclidean cube mosaic using colored $(1\times 1\times 1)$-cubes and $(2\times 1\times 1)$-, $(1\times 2\times 1)$- or $(1\times 1\times 2)$-bricks (union of two adjacent cubes) as a generalization of the tiling above for  3-dimensional space (Figure~\ref{fig:Cube_bricks}). A layer consists of four cells.
From this point on, let $R_n$ be the number of the different tilings
with $a$ colors of cubes and $b$ colors of bricks  of a $(2\times 2\times n)$-board. If $a=1$ and $b=1$, then let $R_n$ be denoted by $r_n$.

We obtain the following main theorem.
\begin{theorem}[Main Theorem]\label{th:maincolor3d}
The sequence $(R_n)_{n=0}^{\infty}$ satisfies  the sixth-order linear homogeneous recurrence relation 
	\begin{equation}\label{eq:main_color3d}
	R_n= \sum_{i=1}^{6}\alpha_i\,R_{n-i} \quad(n\geq6),
	\end{equation}
where 
	\begin{eqnarray*}
	\alpha_1 &=& a^4+7a^2b+6b^2,\\
	\alpha_2 &=& b(a^6+6a^4b+6a^2b^2-7b^3),\\
	\alpha_3 &=& -2b^3(a^6+5a^4b+13a^2b^2+4b^3), \\
	\alpha_4 &=& b^5(a^6+2a^4b+6a^2b^2+9b^3),\\
	\alpha_5 &=& b^8(a^4-a^2b+2b^2),\\
	\alpha_6 &=& -b^{12},
	\end{eqnarray*}
\noindent and the initial values $R_n$ ($n=0,\dots, 5$) are 
$1$, 
$a^4+4a^2b+2b^2$, 
$a^8+12a^6b+42a^4b^2+44a^2b^3+9b^4$,
$a^{12}+20a^{10}b+142a^8b^2+440a^6b^3+588a^4b^4+288a^2b^5+32b^6$,
$a^{16}+28a^{14}b+306a^{12}b^2+1672a^{10}b^3+4863a^8b^4+7416a^6b^5+5470a^4b^6
+1620a^2b^7+121b^8$, and
$a^{20}+36a^{18}b+534a^{16}b^2+4248a^{14}b^3+19774a^{12}b^4+55200a^{10}b^5
+91200a^8b^6+84984a^6b^7+40553a^4b^8+8204a^2b^9+450b^{10}$ (respectively).
\end{theorem}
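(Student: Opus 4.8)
The plan is to follow the transfer-matrix / state-decomposition strategy that is standard for these tiling recursions, carefully adapted to three dimensions. First I would set up a refined counting scheme based on what happens at the "interface" between layer $n-1$ and layer $n$. Because each layer is a $2\times 2$ square of cells, a tiling of the $(2\times 2\times n)$-board, when cut just before layer $n$, leaves a "fault pattern" describing which of the four cells of layer $n$ are already occupied by bricks protruding from layer $n-1$ (a $(1\times 1\times 2)$-brick straddling the cut). Up to the symmetry of the $2\times 2$ square there are only a few such protrusion patterns: nothing protrudes, one cell protrudes, two adjacent cells protrude, two diagonal cells protrude, three cells protrude, or all four protrude. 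I would introduce auxiliary sequences $R_n^{(S)}$ counting tilings of the first $n$ layers whose protrusion set into layer $n+1$ is $S$ (weighted by the colour factors $a,b$), and — crucially — also the "breakable" count $T_n$ of tilings with no protrusion at all across the cut between layers $n-1$ and $n$ (the notion of breakability promised in the abstract). The state "nothing protrudes" is exactly a breakable cut.

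Second, I would derive the one-step transition: given the board is filled up to layer $n-1$ with a prescribed protrusion pattern into layer $n$, count in how many coloured ways one can fill the remaining cells of layer $n$ using $a$-coloured unit cubes, the three orientations of $b$-coloured bricks lying inside layer $n$, and $b$-coloured bricks protruding forward into layer $n+1$. This is a finite bookkeeping problem on a single $2\times 2$ layer, producing a linear system $R_n^{(S)} = \sum_{S'} c_{S,S'}\, R_{n-1}^{(S')}$ with polynomial coefficients in $a,b$. Assembling these into a transfer matrix $M$ over $\mathbb{Z}[a,b]$, the total count is $R_n = \mathbf{v}^{\top} M^{n}\mathbf{w}$ for suitable vectors, and hence $(R_n)$ satisfies the linear recurrence whose characteristic polynomial is (a factor of) $\det(xI - M)$. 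I would then identify the $6\times 6$ "essential" block — after quotienting by the symmetry group of the square, the six protrusion classes listed above give exactly six states — so the minimal polynomial has degree $6$, matching the claimed order. Expanding $\det(xI-M)$ gives the coefficients $\alpha_1,\dots,\alpha_6$; the appearance of $b^{12}=\alpha_6$ as $\pm\det(M)$ (each of the six brick-related states carrying a $b^2$ weight from a straddling brick) is a good consistency check, as is the pattern $\alpha_i$ being divisible by increasing powers of $b$.

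Third, I would pin down the initial values $R_0,\dots,R_5$. Here I would compute $R_0=1$ and $R_1$ directly (tilings of a single $2\times 2\times 1$ layer: $a^4+4a^2b+2b^2$, namely $a^4$ all-cubes, $4a^2b$ for one in-layer brick, $2b^2$ for two parallel bricks), then either continue the direct enumeration or — more efficiently — use the transfer matrix to push the state vector forward and read off $R_2,\dots,R_5$, checking against an independent brute-force count for small $n$. Finally, I would record the breakability identities promised in the abstract: since "$S=\varnothing$" is the breakable state, decomposing any tiling at its first breakable cut yields a convolution $R_n=\sum_{k} (\text{unbreakable block of length }k)\,R_{n-k}$, i.e. $R_n/\!\sum R_n x^n$ factors through the generating function of unbreakable blocks; this both gives an alternative derivation of \eqref{eq:main_color3d} and supplies the combinatorial identities.

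The main obstacle I anticipate is the case analysis for the transition coefficients $c_{S,S'}$: filling the leftover cells of a $2\times 2$ layer when some cells are pre-occupied, while simultaneously deciding which bricks protrude forward, has many subcases (especially the "two adjacent" versus "two diagonal" protrusion patterns, and configurations where an in-layer brick and a forward brick compete for the same cell), and it is easy to over- or under-count or to mishandle the colour weights. Organizing this by the symmetry group of the square, and validating the resulting $6\times 6$ matrix against direct enumeration of $R_n$ for $n\le 6$ or $7$, is what makes the argument trustworthy; once the matrix is correct, expanding the characteristic polynomial and matching the $\alpha_i$ is routine algebra.
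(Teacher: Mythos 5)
Your proposal is correct and follows essentially the same route as the paper: the author also sets up a transfer matrix on six interface states (boards $\mathcal{R}_{0,n},\dots,\mathcal{R}_{5,n}$ obtained by deleting $0$, $1$, $2$ adjacent, $2$ diagonal, or $3$ cells from the last layer, plus one special state for the all-bricks configuration), which is just the deleted-cell dual of your protrusion-pattern states, and reads off the recurrence from the characteristic polynomial of the resulting $6\times 6$ matrix, checking the initial values separately. The one bookkeeping subtlety you flag (the ``all four protrude'' case) is exactly where the paper needs its ad hoc $\tfrac14 b^3$ multiplicity correction, so your anticipated case analysis is on target.
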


If we use only one color for the tilings (i.e., example gray as in our figures), so $a=1$ and $b=1$, then we have the main corollary.   
\begin{corollary}\label{cor:maincolor3d}
 The sequence $(r_n)_{n=0}^{\infty}$ satisfies  the sixth-order linear homogeneous recurrence relation 
	\begin{equation}\label{eq:main_color3d_ab1}
	r_n= 14 r_{n-1} +6 r_{n-2}-46 r_{n-3} + 18r_{n-4} + 2 r_{n-5} - r_{n-6},\quad(n\geq6)
	\end{equation}
where the initial values $r_i$ ($n=0,\dots, 5$) are $1$, $7$, $108$, $1511$, $21497$, $305184$, and $4334009$ ({A033516}).
\end{corollary}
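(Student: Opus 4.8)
The plan is to set up a transfer-matrix (or equivalently a system of coupled recursions) that tracks, layer by layer, how a tiling of the $(2\times2\times n)$-board can be extended to one of the $(2\times2\times(n+1))$-board. The key device is the notion of \emph{breakability} announced in the abstract: call a tiling \emph{breakable} at position $k$ if no brick straddles the gap between layer $k$ and layer $k+1$, and classify a tiling by the ``interface profile'' it presents at its right end — i.e.\ which of the four cells of the last layer are already covered (by bricks protruding from layer $n$ into layer $n+1$) and in what colored configuration. Because a brick has length $2$, the only data that propagates from one layer to the next is this profile on the four cells $\{(1,1),(1,2),(2,1),(2,2)\}$, so there are finitely many states; by the obvious symmetry of the $2\times2$ cross-section under the dihedral group of order $8$, these states collapse to a small number of orbit-representatives (empty interface; one cell filled; two adjacent cells filled; two diagonal cells filled; three cells filled; all four filled), and counting multiplicities and colorings gives a linear system.

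First I would introduce the generating vector $\mathbf{v}_n$ whose components are the numbers of tilings of the $(2\times2\times n)$-board ending in each interface state, with $R_n$ being the component corresponding to the ``empty'' interface (a genuine, flush tiling of length $n$). Second, I would work out the single-layer transition: given an interface state entering layer $n+1$, enumerate all ways to fill the remaining cells of that layer using $a$-colored cubes, in-layer $b$-colored bricks (the $(2\times1\times1)$ and $(1\times2\times1)$ types), and $(1\times1\times2)$ bricks that protrude into layer $n+2$ — this yields a matrix $M=M(a,b)$ with polynomial entries, so that $\mathbf{v}_{n+1}=M\mathbf{v}_n$. Third, I would extract from $M$ the scalar recursion for $R_n$: since $R_n$ is a fixed linear functional of $\mathbf{v}_n=M^n\mathbf{v}_0$, it satisfies the linear recurrence whose characteristic polynomial is (a divisor of) the characteristic polynomial of $M$; computing $\det(xI-M)$ and checking that the ``spurious'' factors cancel will produce the degree-$6$ relation \eqref{eq:main_color3d} with the stated $\alpha_i$. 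Finally, the initial values $R_0,\dots,R_5$ are obtained either directly from the combinatorics of short boards or by iterating $\mathbf{v}_{n+1}=M\mathbf{v}_n$ from $\mathbf{v}_0$, and Corollary~\ref{cor:maincolor3d} follows by specializing $a=b=1$ and simplifying the $\alpha_i$ to $14,6,-46,18,2,-1$ and checking the seven listed integers $1,7,108,1511,21497,305184,4334009$ against the recursion.

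The main obstacle I expect is twofold: correctly \emph{enumerating the single-layer transitions} without double-counting — in particular handling the interaction of in-layer horizontal/vertical bricks with cubes and with protruding bricks on a $2\times2$ face, and keeping the color bookkeeping ($a$ per cube, $b$ per brick) consistent across both ends of every length-$2$ brick — and then verifying that the characteristic polynomial of the (a priori larger) transfer matrix actually \emph{reduces} to the claimed sixth-order polynomial, i.e.\ that the extra invariant factors are cancelled by the structure of the functional picking out $R_n$. A clean way to control the first difficulty is to organize the count through breakability: decompose any tiling as a flush ``breakable block'' followed by a fully ``unbreakable'' remainder of bounded length, derive auxiliary recursions for the breakable and unbreakable counts (as the abstract promises), and assemble these into \eqref{eq:main_color3d}; the identities relating the two counts then also supply the cross-checks needed to pin down the coefficients and the six initial values.
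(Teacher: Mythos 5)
Your plan — a layer-by-layer transfer matrix whose states are the interface profiles (empty, one cell, two adjacent, two diagonal, three, four cells protruded into), reduced by the symmetry of the $2\times2$ cross-section, with the recurrence read off from the characteristic polynomial and the corollary obtained by setting $a=b=1$ — is exactly the paper's method: its boards $\mathcal{R}_{0,n},\dots,\mathcal{R}_{5,n}$ are precisely these states (with multiplicities $4,4,2,4,4$), and the degree-$6$ relation is the characteristic polynomial of the resulting $6\times6$ matrix $\mathbf{M}$, so no spurious-factor cancellation is even needed. The proposal is correct and takes essentially the same approach as the paper.
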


Let us consider the tilings with bricks exclusively. 
Now, $a=0$.  Obviously, the relation \eqref{eq:main_color3d} holds for the sequence of the number of the tilings, but we can formulate another theorem as well.
\begin{theorem}\label{th:bricks3d}
	The sequence $(R_n)_{n=0}^{\infty}$ satisfies  the third-order linear homogeneous recurrence relation 
	\begin{equation}\label{eq:brics3d}
	R_n= 3b^2\, R_{n-1} +3b^4\, R_{n-2}-b^6\, R_{n-3} \quad(n\geq3),
	\end{equation}
	where initial values $R_0=1$, $R_1=2b^2$, and  $R_2=9b^4$.
\end{theorem}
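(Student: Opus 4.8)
The plan is to count brick tilings by a transfer matrix running along the $n$ layers, and then to exploit the fourfold symmetry of the $2\times2$ cross-section to collapse that matrix to size $3\times3$; a $3\times3$ matrix has a cubic characteristic polynomial, so the Cayley--Hamilton theorem immediately produces a third-order recurrence. A preliminary remark makes the shape of \eqref{eq:brics3d} transparent: since the board has $4n$ cells and every brick covers two of them, each tiling uses exactly $2n$ bricks, so $R_n=b^{2n}N_n$, where $N_n$ is the number of uncoloured brick tilings; thus it would even suffice to treat $b=1$, although carrying the colour weights through the matrix costs nothing.

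For the transfer matrix I would fill the layers $1,\dots,n$ from left to right and let the state before layer $k$ be the set $S\subseteq\{1,2,3,4\}$ of cells of that layer already occupied by a $(1\times1\times2)$-brick protruding from layer $k-1$; the state is $\emptyset$ before layer $1$ and again after layer $n$. Filling a layer in state $S$ amounts to choosing the cells $P\subseteq\overline S$ out of which a brick protrudes into the next layer and then tiling the remaining cells $\overline{S\cup P}$ by horizontal bricks of the $2\times2$ square. Writing $m(T)$ for the sum of $b^{|T|/2}$ over the perfect matchings of a cell set $T$ in the $2\times2$ grid graph — so $m(\emptyset)=1$, $m$ of an edge equals $b$, $m$ of a diagonal or of any odd set equals $0$, and $m(\{1,2,3,4\})=2b^2$ — the transition from $S$ to $P$ carries weight $b^{|P|}m(\overline{S\cup P})$, and with $M$ the resulting matrix one has $R_n=(M^n)_{\emptyset,\emptyset}$.

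The main step is the reduction. Only six states are reachable from $\emptyset$: the empty set, the full set, and the four two-element edge sets. The dihedral group of the square fixes the first two and acts transitively on the four edge sets, and since it preserves $m$ it preserves all transition weights; hence $M$, restricted to the reachable states, commutes with this group action and therefore preserves the $3$-dimensional subspace spanned by the indicators of the three orbits $E_0=\{\emptyset\}$, $E_1=\{\text{edge sets}\}$, $E_2=\{\{1,2,3,4\}\}$. Because $e_\emptyset$ is itself such an indicator, $(M^n)_{\emptyset,\emptyset}$ equals the corresponding entry of the $n$-th power of the $3\times3$ matrix
\[
\widetilde M=\begin{pmatrix} 2b^2 & 4b^3 & b^4\\ b & b^2 & 0\\ 1 & 0 & 0\end{pmatrix}
\]
obtained by summing the transition weights within each orbit (for instance the $4b^3$ records the four edge sets $P$, each of weight $b^2\cdot b$, reachable from $\emptyset$). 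A direct computation gives the characteristic polynomial of $\widetilde M$ as $x^3-3b^2x^2-3b^4x+b^6$, so Cayley--Hamilton yields $\widetilde M^{\,n}=3b^2\widetilde M^{\,n-1}+3b^4\widetilde M^{\,n-2}-b^6\widetilde M^{\,n-3}$ for $n\geq3$, that is \eqref{eq:brics3d}, while $R_0=1$, $R_1=2b^2$, $R_2=9b^4$ come from evaluating $(\widetilde M^{\,n})_{E_0,E_0}$ at $n=0,1,2$.

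The step I expect to be the real obstacle is making the collapse to $\widetilde M$ airtight: a priori $(M^n)_{\emptyset,\emptyset}$ need not be recoverable from aggregated transitions, so one must verify the (weighted) lumpability condition, namely that for every reachable state the total transition weight into each orbit depends only on the orbit of the state — a short finite check that is nonetheless the logical heart of the argument. Note that one may \emph{not} shortcut this by invoking Theorem~\ref{th:maincolor3d} with $a=0$: that only gives a sixth-order relation, and although its characteristic polynomial $x^6-6b^2x^5+7b^4x^4+8b^6x^3-9b^8x^2-2b^{10}x+b^{12}$ does factor as $(x^3-3b^2x^2-3b^4x+b^6)(x^3-3b^2x^2+b^4x+b^6)$, which is consistent with the claim, that factorisation alone does not prove the sequence satisfies the cubic factor.
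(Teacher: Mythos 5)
Your proof is correct, and it is at heart the same method as the paper's: a layer-by-layer transfer matrix whose states describe the interface between consecutive layers, reduced to a $3\times3$ matrix whose characteristic polynomial $x^3-3b^2x^2-3b^4x+b^6$ gives \eqref{eq:brics3d} via Cayley--Hamilton. The difference lies in how the reduction to three states is justified. The paper works with sub-board types $\mathcal{R}_{0,n}$, $\mathcal{R}_{2,n}$, $\mathcal{R}_{5,n}$ (obtained from Figure~\ref{fig:table_recursio3d} by deleting the rows and columns of the shapes unrealizable with bricks), counts each type with its multiplicity ($1$, $4$, $4$), and compensates for overcounting with the ad hoc $\tfrac14 b^3$ entry; its matrix $\bigl(\begin{smallmatrix}2b^2 & b^3 & \frac14 b^3\\ 4b & b^2 & 0\\ 4b & 0 & 0\end{smallmatrix}\bigr)$ is conjugate to your $\widetilde M$ by the diagonal multiplicity matrix and has the same characteristic polynomial. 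You instead keep all six reachable protrusion states and collapse the four edge states by verifying the weighted lumpability condition under the dihedral group of the square; this makes rigorous exactly the step the paper handles by the $\tfrac14$ factor, and is arguably the cleaner justification. Your closing remark is also well taken: setting $a=0$ in Theorem~\ref{th:maincolor3d} only yields a sextic whose characteristic polynomial happens to contain $x^3-3b^2x^2-3b^4x+b^6$ as a factor, which by itself does not prove the cubic recurrence, so an independent argument such as yours (or the paper's) is genuinely needed.
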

If $a=0$ and $b=1$, then the sequence $(R_n)=(1, 2,  9,  32, 121, 450, 1681, 6272, 23409, \ldots )$ appears as  {A006253} in the OEIS \cite{oeis}. 

\section{Tilings of  $(2\times 2\times n)$-board}

For ease of notation, let us denote the $(2\times 2\times n)$-board by $\mathcal{B}_n$. Thus, a layer is denoted by $\mathcal{B}_1$.
Now, we define the \textit{breakability} of a tiling of $\mathcal{B}_n$. 
A tiling is \textit{breakable} at layer~$i$ if it can be split up into two subtilings, one covering layers 1 through $i$ and the other covering layers ($i+1$) through $n$.  In other words,  this tiling is a concatenation of two possible tilings of the subboard $\mathcal{B}_i$ and the subboard $\mathcal{B}_{n-i}$. Clearly, the number of colored tilings of such a board is $R_iR_{n-i}$. 
Otherwise, the tiling is \textit{unbreakable} at layer~$i$. A tiling is unbreakable at layer~$i$ (or at position~$i$), if and only if at least one $(1\times 1\times 2)$-brick has the part in layer~$i$ and layer~$(i+1)$ as well. In other words, a brick covers the position~$i$ or a brick overhangs the layer~$i$ (see Figure~\ref{fig:unbreakable3d_n2}).
If a tiling is unbreakable at all the positions, then we say that it is unbreakable. We consider all the tilings unbreakable at positions~$0$ and $n$. 
Let $\widetilde{R}_n$ be the number of the different unbreakable tilings with $a$ colors of cubes and $b$ colors of bricks of $\mathcal{B}_n$. If $a=1$ and $b=1$, then let $\widetilde{R}_n$ be denoted by $\widetilde{r}_n$.

\subsection{Proof of theorems}

\subsubsection{Proof of Theorem~\ref{th:maincolor3d}}

Figure~\ref{fig:tiling3d_n1} illustrates all the seven different tilings of $\mathcal{B}_1$ with cubes and bricks without coloring. If we use $a$ colors of cubes and $b$ colors of bricks, then according to the tilings from left to right in Figure~\ref{fig:tiling3d_n1}, we have $b^2$, $b^2$, $a^2b$, $a^2b$, $a^2b$, $a^2b$, and  $a^4$ different tilings. Since they are all unbreakable, $R_1=\widetilde{R}_1=a^4+4a^2b+2b^2$ and $r_1=\widetilde{r}_1=7$. Moreover, let $R_0=\widetilde{R}_0=r_0=\widetilde{r}_0=1$, because we can tile  the empty board only one way, which is unbreakable as well.   

\begin{figure}[!h]
	\centering
	\includegraphics{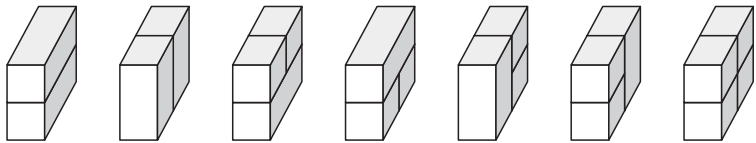} 
	\caption{Tilings of $\mathcal{B}_1$}
	\label{fig:tiling3d_n1}
\end{figure}

Considering $\mathcal{B}_2$, there are breakable tilings. Obviously, we obtain them by the concatenation of two tilings of $\mathcal{B}_1$  (see Figure~\ref{fig:breakable3d_n2}). Thus, their number is $7\cdot7$, but with colored cubes and bricks their number grows to $(a^4+4a^2b+2b^2)^2$.
\begin{figure}[!h]
	\centering
	\includegraphics{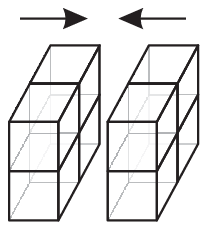} 
	\caption{Breakable tilings of $\mathcal{B}_2$}
	\label{fig:breakable3d_n2}
\end{figure}

For its unbreakable tilings we differentiate five cases. 
The first case is when exactly one brick covers position~1. (The leftmost  sub-figure in Figure~\ref{fig:unbreakable3d_n2} shows this case.) The other cells (white cubes in the figure) of the board can be cubes and bricks, and in both layers, they give three different subtilings --- one with three cubes and two tilings with one cube and one brick. As the covering brick can have four places we can see that the number of tilings in this case is $4\cdot3\cdot3$ and $4b(a^3+2ab)^2$, respectively, without and with coloring. 
The second case is when there are exactly two bricks with a common face overhanging in layer~$1$ (second sub-figure in Figure~\ref{fig:unbreakable3d_n2}). Now, there are $4\cdot2\cdot2$ such a tiling without coloring and  $4b^2(a^2+b)^2$ with coloring. 
In the third case, when the two overhanging bricks have a common edge, the number of tilings is $2b^2a^4$ (and 2 without coloring). 
The fourth case contains three covering bricks $4b^3a^2$ ways. 
And finally, when we tile with the maximum four bricks, then the number of possible tilings is $b^4$.

\begin{figure}[!h]
	\centering
	\includegraphics{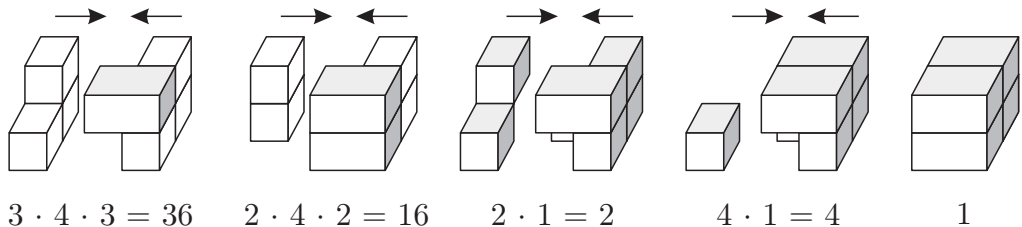} 
	\caption{Unbreakable tilings of $\mathcal{B}_2$}
	\label{fig:unbreakable3d_n2}
\end{figure}

Summarizing the results, we have $R_2=a^8+12a^6b+42a^4b^2+44a^2b^3+9b^4$, $r_2=108$, and $\widetilde{R}_2=4a^6b+22a^4b^2+28a^2b^3+5b^4$, $\widetilde{r}_2=59$.

Now, we define some new different types of subboards based on the six different types of tilings discussed earlier considering $\mathcal{B}_2$. 

\begin{figure}[!h]
	\centering
	\includegraphics{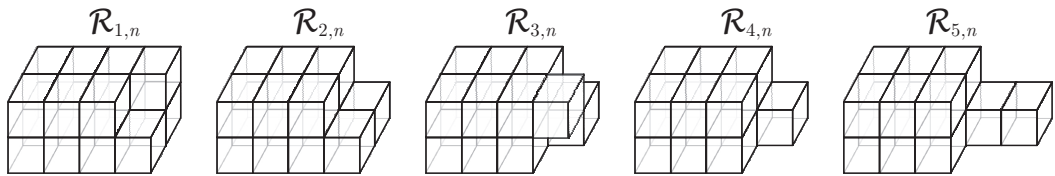} 
	\caption{Boards $\mathcal{R}_{j,n}$ when $n=4$ and $j=1,\ldots,5$}
	\label{fig:tiling3d_Rn}
\end{figure}

We let $\mathcal{R}_{1,n}$ denote the boards, when one cell is omitted from the $n$th layer of $\mathcal{B}_{n}$ (see Figure~\ref{fig:tiling3d_Rn}). The number of $\mathcal{R}_{1,n}$ is four because one cell can be deleted four different ways from the last layer  (see Figure~\ref{fig:tiling3d_Rn}).
Similarly, let $\mathcal{R}_{2,n}$, $\mathcal{R}_{3,n}$, and $\mathcal{R}_{4,n}$ denote the boards, respectively, when we delete from the $n$th layer of $\mathcal{B}_{n}$ two cells with a common face, two cells with only a common edge and three cells (Figure~\ref{fig:tiling3d_Rn}). The number of $\mathcal{R}_{2,n}$, $\mathcal{R}_{3,n}$,  and $\mathcal{R}_{4,n}$ is 4, 2, and 4, respectively.
Let $\mathcal{R}_{5,n}$ be the board obtained by joining a cell of layer~$(n+1)$ to an board $\mathcal{R}_{4,n}$, so that they have a common face. (The last two cells form a brick which covers the position~$n$, as the last sub-figure  of Figure~\ref{fig:tiling3d_Rn} shows.) The number of such boards is four. We shall always tile the last two cells of $\mathcal{R}_{5,n}$ with a brick according to the last sub-figure of Figure~\ref{fig:unbreakable3d_n2}, otherwise all the tilings of $\mathcal{R}_{4,n}$ could be the subtilings of the tiling of $\mathcal{R}_{5,n}$. Finally, $\mathcal{B}_{n}$ is denoted by $\mathcal{R}_{0,n}$.
Obviously, a tiling of all the new boards can be breakable or unbreakable at position~$(n-1)$ except in the case of $\mathcal{R}_{5,n}$. It is breakable, when $n\geq2$.

Moreover, we let $R_{0,n}$, $R_{1,n}$, $R_{2,n}$, $R_{3,n}$, $R_{4,n}$, and $R_{5,n}$ denote the number of tilings of board $\mathcal{R}_{0,n}$, $\mathcal{R}_{1,n}$, $\mathcal{R}_{2,n}$, $\mathcal{R}_{3,n}$, $\mathcal{R}_{4,n}$, and $\mathcal{R}_{5,n}$, respectively. The values of $\widetilde{R}_{0,n}$, $\widetilde{R}_{1,n}$, $\widetilde{R}_{2,n}$, $\widetilde{R}_{3,n}$, $\widetilde{R}_{4,n}$, and $\widetilde{R}_{5,n}$ are the numbers of the appropriate unbreakable tilings.

Examine  Figure~\ref{fig:tiling3d_n1} and \ref{fig:unbreakable3d_n2}   again. One can easily see  that if $n=1$, then we have 
$R_{0,1}=\widetilde{R}_{0,1}=R_1=\widetilde{R}_1=a^4+4a^2b+2b^2$, 
$R_{1,1}=\widetilde{R}_{1,1}=4(a^3+2ab)$,
$R_{2,1}=\widetilde{R}_{2,1}=4(a^2+b)$,
$R_{3,1}=\widetilde{R}_{3,1}=2a^2$,
$R_{4,1}=\widetilde{R}_{4,1}=4a$,
$R_{5,1}=\widetilde{R}_{5,1}=4b$. Furthermore, $\widetilde{R}_{5,n}=0$ for $n\geq2$.

Now, we give $R_{j,n}$ ($j=0,\ldots,5$) recursively for $n\geq2$. 
Figure~\ref{fig:table_recursio3d} illustrates how $\mathcal{R}_{j,n}$ can be built from $\mathcal{R}_{k,n-1}$ ($j,k=0,\ldots,5$, $n\geq2$). Row~0 contains the tiled boards in the case $n-1$, and column~0 does the tiled boards in the case $n$ while the other items of the table give the connection between them. 

\begin{figure}[!h]
	\centering
	\includegraphics{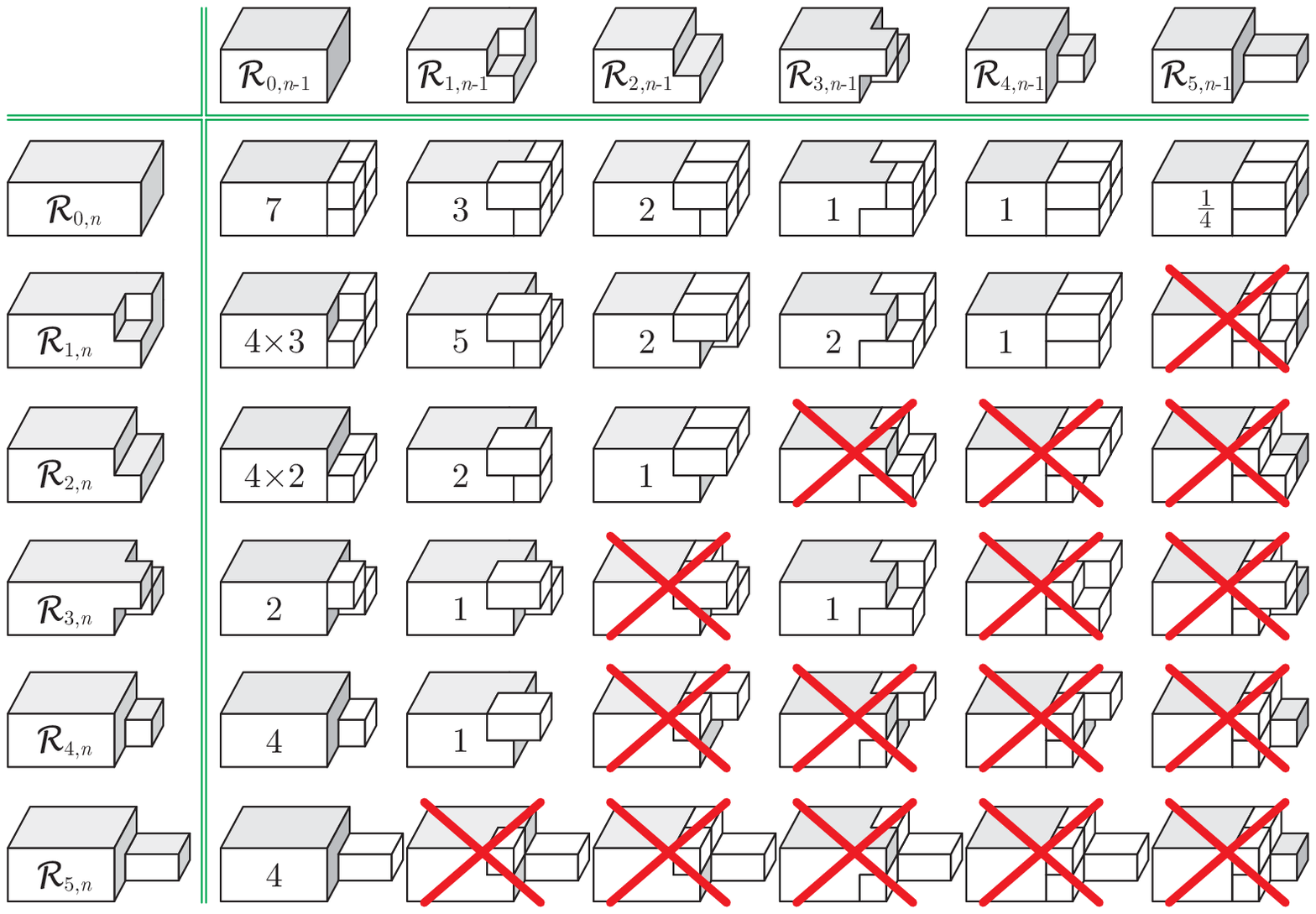} 
	\caption{Table for recursion without coloring}
	\label{fig:table_recursio3d}
\end{figure}

Let us examine the first row of this table. The first row shows the possible structures of $\mathcal{R}_{0,n}$ considering the last layers. (Of course, in the case $n=2$, it gives back the tilings discussed above.)  

\renewcommand{\labelitemi}{$-$}
\begin{itemize}[noitemsep,topsep=0pt]
\item The first item illustrates that the tilings of $\mathcal{R}_{0,n}$ can be built from the tilings of $\mathcal{R}_{0,n-1}$ seven different ways (see Figure~\ref{fig:tiling3d_n1}). Taking into consideration the coloring, they are $a^4+4a^2b+2b^2$ altogether.

\item  We take a tiling of $\mathcal{R}_{1,n-1}$ and complete it to a tiling of $\mathcal{R}_{0,n}$. If we put a cube into the missing cell's place in the $(n-1)$th layer, then we would get the first case (considering $\mathcal{R}_{0,n-1}$), and we would not get new types of tilings. So, we must put here an overhanging brick. The other cells can be covering cubes and bricks 3 different ways, with coloring $b(a^3+2ab)$ ways.

\item If we put two cubes or only one cube into the $(n-1)$th layer of $\mathcal{R}_{2,n-1}$, then we would get back the previous two cases, respectively. Thus, for a new type of tilings, we must cover the position~$n-1$ with two overhanging bricks. The other two cells must be cubes or a brick. So the number of tilings with coloring, building from $\mathcal{R}_{2,n-1}$ is $b^2(a^2+b)$. 

\item Similarly, in the case of $\mathcal{R}_{3,n-1}$, we need two overhanging bricks, and the number of tilings  is $a^2b^2$. 

\item Analogously, $\mathcal{R}_{4,n-1}$ provides $ab^3$ new tilings.

\item  Finally, considering the last item in row~1, with 3 additional bricks we get a tiling of $\mathcal{R}_{0,n}$ from $\mathcal{R}_{5,n-1}$. For the reason that all the four $\mathcal{R}_{5,n-1}$ will generate the same tiling, we have to divide the number of tilings of such a board by 4 because of the multiplicity. With coloring, it is $\frac14b^3$. All the other tilings originating from $\mathcal{R}_{5,n-1}$, when we use at least one cube, have been realized among the previous cases.  
\end{itemize}

Studying the other items of Figure~\ref{fig:table_recursio3d} we gain more connections between the tilings of the boards. The items crossed with red lines do not provide new types of tilings. They are partly in another type or the connection is not realizable. 

Summarizing the results, we have the system of homogeneous recurrence equations
\begin{equation}\label{eq:rec_system3d}
R_{j,n}=\sum_{k=0}^{5} m_{j,k}R_{k,n-1}, \qquad j=0, \ldots, 5,
\end{equation} 
where the matrix of the coefficients $m_{i,j}$ is 
$$\mathbf{M}=\begin{pmatrix}
a^4+4a^2b+2b^2 &b(a^3+2ab) &b^2(a^2+b)&a^2b^2& ab^3& \frac14 b^3\\
4(a^3+2ab)     & b(3a^2+2b)& 2ab^2    &2ab^2 & b^3 & 0\\
4(a^2+b)       & 2ab       & b^2      & 0    & 0   & 0\\
2a^2           & ab        & 0        & 0    & 0   & 0\\
4a             & b         & 0        & 0    & 0   & 0\\
4b             & 0         & 0        & 0    & 0   & 0\\
\end{pmatrix}. $$

As usual, the characteristic equation  
\begin{multline*}
x^6 + (-a^4-7a^2b-6b^2)x^5 + (-a^6b-6a^4b^2-6a^2b^3+7b^4)x^4 +\\ (2a^6b^3+10a^4b^4+26a^2b^5+8b^6)x^3 + (-a^6b^5-2a^4b^6-6a^2b^7-9b^8)x^2 +\\ (-a^4b^8+a^2b^9-2b^{10})x +b^{12} = 0
\end{multline*}
of $\mathbf{M}$ provides the recurrence relation for $(R_{j,n})$; see the proof of N\'emeth and  Szalay \cite[Lem.\ 2.1]{NSz_Power}. Recall $(R_{n})=(R_{0,n})$. (The computation was made by the help of software \textsc{Maple}.) Thus, we have relation \eqref{eq:main_color3d} of Main Theorem.

Moreover, we obtain the initial values of the recurrence for $n=2,3,4,5$ from the system \eqref{eq:rec_system3d}. Finally, we can check that relation \eqref{eq:main_color3d}  holds not only for the cases $n>6$ but also for $n=6$.  

\subsubsection{Proof of Theorem~\ref{th:bricks3d}}

Since the subboards $\mathcal{R}_{1,n}$, $\mathcal{R}_{3,n}$, and $\mathcal{R}_{4,n}$ can not be realized  with bricks exclusively, we delete the appropriate rows and columns from Figure~\ref{fig:table_recursio3d} and for tilings with bricks we gain Figure~\ref{fig:table_recursio_bricks3d}.

As with the previous proof, we obtain the matrix 
$$\mathbf{M}=\begin{pmatrix}
2b^2 & b^3 & \frac14 b^3\\
4b   & b^2 & 0 \\
4b   & 0   & 0
\end{pmatrix} $$
and its characteristic equation provides the recurrence relation \eqref{eq:brics3d}. 

\begin{figure}[!h]
	\centering
	\includegraphics{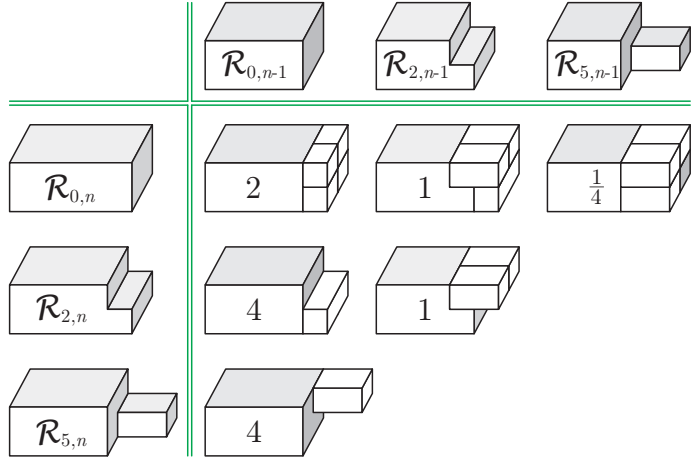} 
	\caption{Table for recursion of tilings with bricks}
	\label{fig:table_recursio_bricks3d}
\end{figure}

\subsection{Unbreakable tilings}

In this subsection, we determine the number of unbreakable tilings and using the results of the previous subsection we shall prove the following theorem and its corollary in the case $a=b=1$.

\begin{theorem}\label{th:unbreakable3d}
	The sequence $(\widetilde{R}_n)_{n=0}^{\infty}$ satisfies  the fourth-order linear homogeneous recurrence relation for $n\geq7$
\begin{equation}\label{eq:main_color3dun}
	\widetilde{R}_n= b(3a^2+4b)\,\widetilde{R}_{n-1} - 4b^4\,\widetilde{R}_{n-2} - 3a^2b^5\,\widetilde{R}_{n-3} + b^8\,\widetilde{R}_{n-4},
\end{equation}
where 
$\widetilde{R}_{0}=1$,
$\widetilde{R}_{1}=a^4+4a^2b+2b^2$,
$\widetilde{R}_{2}= 4a^6b+22a^4b^2+28a^2b^3+5b^4$ 
and the initial values of \eqref{eq:main_color3dun} are 
$\widetilde{R}_{3}= 12a^8b^2+80a^6b^3+158a^4b^4+88a^2b^5+4b^6$,  $\widetilde{R}_{4}= 36a^{10}b^3+288a^8b^4+776a^6b^5+798a^4b^6+252a^2b^7+4b^8$, 
$\widetilde{R}_{5}= 108a^{12}b^4+1008a^{10}b^5+3420a^8b^6+5112a^6b^7+3234a^4b^8+656a^2b^9+4b^{10}$, and
$\widetilde{R}_{6}= 324a^{14}b^5+3456a^{12}b^6+14112a^{10}b^7+27624a^8b^8+26576a^6b^9+11470a^4b^{10}+1644a^2b^{11}+4b^{12}$.
\end{theorem}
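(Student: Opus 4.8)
The plan is to combine the breakability bijection with the rational generating function provided by Theorem~\ref{th:maincolor3d}. First I would use the \emph{first break point} decomposition of an arbitrary tiling of $\mathcal{B}_n$: let $i$ be the least position at which it is breakable, and put $i=n$ if it is unbreakable. Then layers $1,\dots,i$ carry an unbreakable tiling of $\mathcal{B}_i$ (unbreakable by minimality of $i$), layers $i+1,\dots,n$ carry an arbitrary tiling of $\mathcal{B}_{n-i}$, and gluing two such tilings back together produces a tiling whose first break point is again exactly $i$; hence this is a bijection and
\begin{equation*}
R_n=\sum_{i=1}^{n}\widetilde{R}_i\,R_{n-i}\qquad(n\ge 1),\qquad R_0=\widetilde{R}_0=1 .
\end{equation*}
Writing $\mathcal{R}(x)=\sum_{n\ge0}R_nx^n$ and $\widetilde{\mathcal{R}}(x)=\sum_{n\ge1}\widetilde{R}_nx^n$, this reads $\mathcal{R}(x)-1=\widetilde{\mathcal{R}}(x)\,\mathcal{R}(x)$, i.e. $\widetilde{\mathcal{R}}(x)=1-1/\mathcal{R}(x)$.

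By Theorem~\ref{th:maincolor3d}, $\mathcal{R}(x)=P(x)/Q(x)$ with $Q(x)=1-\sum_{i=1}^{6}\alpha_ix^i$, and from $P(x)=\mathcal{R}(x)Q(x)$ one gets $P(x)=\sum_{j\ge0}p_jx^j$ with $p_j=R_j-\sum_{i=1}^{\min(j,6)}\alpha_iR_{j-i}$; thus $p_0=1$ and $p_n=0$ for $n\ge6$ by \eqref{eq:main_color3d}, so $\deg P\le5$. The step carrying the real content is to verify, as polynomial identities in $a,b$ (a short \textsc{Maple} computation of the kind already used in the paper), that
\begin{equation*}
P(x)=1-b(3a^2+4b)\,x+4b^4x^2+3a^2b^5x^3-b^8x^4 ,
\end{equation*}
and in particular that $p_5=R_5-\sum_{i=1}^{5}\alpha_iR_{5-i}$ vanishes identically in $a,b$, even though \eqref{eq:main_color3d} is only asserted from $n=6$; hence $\deg P=4$ (for $b\ne0$), while $\deg Q=6$. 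The degenerate case $b=0$ is trivial, as then $\widetilde{R}_n=0$ for $n\ge2$.

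Now $\widetilde{\mathcal{R}}(x)=1-1/\mathcal{R}(x)=\bigl(P(x)-Q(x)\bigr)/P(x)$, so $\widetilde{\mathcal{R}}(x)\,P(x)=P(x)-Q(x)$, a polynomial of degree $6$. Comparing the coefficient of $x^n$ yields $\sum_{i=0}^{4}p_i\widetilde{R}_{n-i}=[x^n]\bigl(P(x)-Q(x)\bigr)$ for $n\ge5$ (all indices being positive). For $n\ge7$ the right side is $0$, which after inserting the values of $p_0,\dots,p_4$ is exactly \eqref{eq:main_color3dun}; for $n=5$ and $n=6$ it equals $\alpha_5=b^8(a^4-a^2b+2b^2)$ and $\alpha_6=-b^{12}$, both nonzero, which is why the recurrence starts only at $n=7$. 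The initial data then follow from $\widetilde{R}_n=R_n-\sum_{i=1}^{n-1}\widetilde{R}_iR_{n-i}$ together with $R_0,\dots,R_5$ from Theorem~\ref{th:maincolor3d} and $R_6$ from \eqref{eq:main_color3d}: this reproduces the listed $\widetilde{R}_0,\dots,\widetilde{R}_6$ (for instance $\widetilde{R}_2=R_2-R_1^2=4a^6b+22a^4b^2+28a^2b^3+5b^4$, as already observed in the proof of Theorem~\ref{th:maincolor3d}), and the corollary is the case $a=b=1$.

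The single real obstacle is thus the symbolic verification that $P(x)$ has degree exactly $4$ with the stated coefficients; everything after that is generating-function bookkeeping. As an alternative closer to the spirit of the previous subsection, one can set up a linear recursion system directly for the counts $\widetilde{R}_{j,n}$ of unbreakable tilings of the boards $\mathcal{R}_{j,n}$, retaining in each row of Figure~\ref{fig:table_recursio3d} only the transitions that force a brick across position $n-1$; but there one must account for the configuration of four vertical bricks across position $n-1$, which makes $\widetilde{R}_{0,n}$ depend on $\widetilde{R}_{n-2}$, and it is precisely this extra dependence that raises the order of the final recurrence from $3$ to $4$.
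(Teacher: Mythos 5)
Your argument is correct, but it reaches the recurrence by a genuinely different route from the paper. You reduce everything to Theorem~\ref{th:maincolor3d} via the first-break-point convolution $R_n=\sum_{i=1}^{n}\widetilde{R}_iR_{n-i}$ (which is Identity~\ref{id:Rn2} in disguise), so that $\widetilde{\mathcal{R}}(x)=(P(x)-Q(x))/P(x)$, and the whole theorem collapses to one symbolic check: that the numerator $P$ of $\mathcal{R}(x)=P(x)/Q(x)$ is exactly $1-b(3a^2+4b)x+4b^4x^2+3a^2b^5x^3-b^8x^4$, in particular that $p_5$ vanishes identically. That claim is consistent with the paper (your $P$ is precisely the reciprocal of the degree-$4$ factor of the characteristic polynomial of the paper's matrix $\mathbf{U}$), and it checks out numerically at $a=b=1$, where $p_5=305184-(14\cdot21497+6\cdot1511-46\cdot108+18\cdot7+2)=0$; deferring the general verification to \textsc{Maple} is no worse than what the paper itself does. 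The paper instead builds a second, smaller transfer matrix $\mathbf{U}$ by deleting from Figure~\ref{fig:table_recursio3d} the transitions out of $\mathcal{R}_{0,n-1}$ and $\mathcal{R}_{5,n-1}$ (which produce breakable tilings) and reads the recurrence off its characteristic polynomial; that scheme misses the single four-vertical-bricks tiling of $\mathcal{B}_2$, forcing an ad hoc correction of $\widetilde{R}_2$ by $b^4$ and an unexplained shift of the validity threshold to $n\geq7$. Your derivation explains that threshold cleanly: $\deg(P-Q)=6$ exactly because $\alpha_6=-b^{12}\neq0$, so the convolution $\sum_{i=0}^{4}p_i\widetilde{R}_{n-i}$ is nonzero at $n=5,6$ and zero from $n=7$ on. What you give up is combinatorial self-containedness (you need the full strength of Theorem~\ref{th:maincolor3d}, including $R_6$, to seed the initial values), and your closing aside slightly misdescribes the paper's matrix route: the order $4$ there comes from the $5\times5$ matrix $\mathbf{U}$ having characteristic polynomial $x\cdot(\deg 4)$, not from an extra dependence on $\widetilde{R}_{n-2}$; the four-brick configuration only affects the initial value $\widetilde{R}_2$. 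That aside is not part of your proof, so it does not affect its validity.
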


\begin{corollary}\label{cor:unbreakable3d}
	The sequence $(\widetilde{r}_n)_{n=0}^{\infty}$ satisfies  the fourth-order linear homogeneous recurrence relation for $n\geq7$
\begin{equation}\label{eq:main_color3dun11}
	\widetilde{r}_n= 7\,\widetilde{r}_{n-1} - 4\,\widetilde{r}_{n-2} - 3\,\widetilde{r}_{n-3} + \widetilde{r}_{n-4},
\end{equation}
	where 
	$\widetilde{r}_0=1$,
	$\widetilde{r}_1=7$, 
	$\widetilde{r}_2=59$, 
	$\widetilde{r}_3=342$,  
	$\widetilde{r}_4=2154$, 
	$\widetilde{r}_5=13542$, and 
	$\widetilde{r}_6=85210$.
	(This sequence has  not been in the OEIS, yet.)
\end{corollary}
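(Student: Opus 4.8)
The plan is to relate breakable and unbreakable tilings by a \emph{first-break} decomposition, translate it into generating functions, and then read off the recurrence for $\widetilde R_n$ from the rational form of the generating function of $R_n$ supplied by Theorem~\ref{th:maincolor3d}; Corollary~\ref{cor:unbreakable3d} is then just the specialisation $a=b=1$.

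\emph{Step 1: the first-break identity.} I would first prove
$$R_n=\sum_{i=1}^{n}\widetilde R_i\,R_{n-i}\qquad(n\ge1),\qquad R_0=1.$$
Given a tiling $T$ of $\mathcal B_n$, if $T$ is unbreakable put $i=n$; otherwise let $i\in\{1,\dots,n-1\}$ be the smallest position at which $T$ is breakable. The part of $T$ on layers $1,\dots,i$ is then an unbreakable tiling of $\mathcal B_i$ (a break of it at some $j<i$ would also be a break of $T$, contradicting minimality), and the part on layers $i+1,\dots,n$ is an arbitrary tiling of $\mathcal B_{n-i}$ (the empty tiling when $i=n$). Conversely, concatenating an unbreakable tiling of $\mathcal B_i$ with any tiling of $\mathcal B_{n-i}$ yields a tiling of $\mathcal B_n$ whose first break is exactly at $i$ (for $i<n$), or an unbreakable tiling (for $i=n$). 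Since the colours in the two pieces are chosen independently, the number of tilings of $\mathcal B_n$ with first break at $i$ is $\widetilde R_iR_{n-i}$, and summing over $i$ gives the identity. In particular it forces $\widetilde R_1=R_1$ and $\widetilde R_n=R_n-\sum_{i=1}^{n-1}\widetilde R_iR_{n-i}$ for $n\ge2$, from which $\widetilde R_0,\dots,\widetilde R_6$ can be computed from $R_0,\dots,R_6$; these agree with the values stated in the theorem.

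\emph{Step 2: generating functions.} Put $\mathcal R(x)=\sum_{n\ge0}R_nx^n$ and $\widetilde{\mathcal R}(x)=\sum_{n\ge1}\widetilde R_nx^n$. The identity of Step~1 is equivalent to $\mathcal R(x)=1+\widetilde{\mathcal R}(x)\mathcal R(x)$, hence $\widetilde{\mathcal R}(x)=1-1/\mathcal R(x)$. By Theorem~\ref{th:maincolor3d}, $\mathcal R(x)=P(x)/Q(x)$ with $Q(x)=1-\sum_{i=1}^{6}\alpha_ix^i$ of degree $6$ and $P(x)=\sum_{n=0}^{5}\bigl(R_n-\sum_{i=1}^{n}\alpha_iR_{n-i}\bigr)x^n$, obtained from the initial values. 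The one computational fact that makes the final recurrence have order $4$ rather than $5$ is that the coefficient of $x^5$ in $P(x)$ vanishes; a direct computation with the data of Theorem~\ref{th:maincolor3d} (most conveniently in a computer algebra system) gives
$$P(x)=1-b(3a^2+4b)x+4b^4x^2+3a^2b^5x^3-b^8x^4.$$

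\emph{Step 3: extracting the recurrence and finishing.} Then $1/\mathcal R(x)=Q(x)/P(x)$ and $\widetilde{\mathcal R}(x)=\bigl(P(x)-Q(x)\bigr)/P(x)$, i.e. $P(x)\widetilde{\mathcal R}(x)=P(x)-Q(x)$. The right-hand side has degree $6$ (its $x^6$-coefficient is $-\alpha_6=b^{12}\ne0$, and its $x^5$-coefficient is $-\alpha_5$, also nonzero in general), so comparing the coefficient of $x^n$ gives $\widetilde R_n-b(3a^2+4b)\widetilde R_{n-1}+4b^4\widetilde R_{n-2}+3a^2b^5\widetilde R_{n-3}-b^8\widetilde R_{n-4}=0$ for every $n\ge7$, while the relation genuinely fails for $n=5,6$ because $\alpha_5,\alpha_6\ne0$. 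This is exactly \eqref{eq:main_color3dun}; the initial values $\widetilde R_3,\dots,\widetilde R_6$ come from Step~1 (or, equivalently, from the Taylor expansion of $\bigl(P(x)-Q(x)\bigr)/P(x)$). Setting $a=b=1$ turns the recurrence into $\widetilde r_n=7\widetilde r_{n-1}-4\widetilde r_{n-2}-3\widetilde r_{n-3}+\widetilde r_{n-4}$ and the initial values into $1,7,59,342,2154,13542,85210$, which is Corollary~\ref{cor:unbreakable3d}. The only real obstacle is the polynomial bookkeeping: assembling $P(x)$ from the bulky expressions of Theorem~\ref{th:maincolor3d} and confirming that its $x^5$-coefficient is zero. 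The combinatorial step (the first-break bijection) and the generating-function manipulation are routine.
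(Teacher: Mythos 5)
Your proof is correct, but it reaches the recurrence by a genuinely different route than the paper. The paper proves Theorem~\ref{th:unbreakable3d} (of which the corollary is the specialisation $a=b=1$) by modifying the transfer-matrix system \eqref{eq:rec_system3d}: it deletes the columns corresponding to $R_{0,n-1}$ and $R_{5,n-1}$ (the continuations that would create a break), obtains a reduced matrix $\mathbf{U}$ whose characteristic polynomial $x\bigl(x^4-(3a^2b+4b^2)x^3+4b^4x^2+3a^2b^5x-b^8\bigr)$ yields \eqref{eq:main_color3dun}, and computes the initial values from the reduced system, adding $b^4$ to $\widetilde{R}_2$ by hand for the all-brick unbreakable tiling of $\mathcal{B}_2$ --- which is exactly why the relation only starts at $n=7$. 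You instead take the first-break convolution $R_n=\sum_{i=1}^{n}\widetilde{R}_iR_{n-i}$ (the paper records the equivalent last-break version as Identity~\ref{id:Rn} and this exact form in Identity~\ref{id:Rn2}), pass to generating functions, and read the order-$4$ recurrence off the denominator $P(x)$ of $\mathcal{R}(x)=P(x)/Q(x)$; the whole content is then the single computation that the $x^5$-coefficient of $P$ vanishes, so that $P(x)=1-b(3a^2+4b)x+4b^4x^2+3a^2b^5x^3-b^8x^4$ (at $a=b=1$ the coefficients are indeed $1,-7,4,3,-1,0$, and the convolution reproduces $\widetilde{r}_0,\dots,\widetilde{r}_6=1,7,59,342,2154,13542,85210$), together with the observation that $P-Q$ has degree $6$, which cleanly explains why the recurrence holds for $n\geq7$ but fails for $n=5,6$. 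Your route buys a structural explanation of the threshold $n=7$ and avoids building a separate transfer matrix for unbreakable tilings, at the cost of leaning on the full Main Theorem (including $R_6$) and on a polynomial identity you rightly flag as the only real work; the paper's route stays at the combinatorial level throughout. One trivial slip that affects nothing: the $x^6$- and $x^5$-coefficients of $P(x)-Q(x)$ are $\alpha_6=-b^{12}$ and $\alpha_5$, not $-\alpha_6$ and $-\alpha_5$; both are still nonzero, which is all your argument uses.
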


\begin{proof}[Proof of Theorem~\ref{th:unbreakable3d}]
We recognized during the discussion of Figure~\ref{fig:table_recursio3d} that the tilings from $(R_{0,n-1})$ ($n\geq2$) and $(R_{5,n-1})$ are breakable if $n\geq3$. (We mention that if $n=2$, then there are $b^4$ unbreakable tilings with four overhanging bricks, which we will consider later  in our calculation.) So we have to delete the columns containing $(R_{0,n-1})$ and $(R_{5,n-1})$. In the other cases, we obtain unbreakable tilings, when $(R_{j,n-1})$ ($j=1,\ldots,4$) are unbreakable. Then the coefficient matrix of system \eqref{eq:rec_system3d} --- having omitted the sequence $(R_{5,n})$ --- in case of unbreakable tilings ($n\geq2$) is
$$\mathbf{U}=\begin{pmatrix}
0 &b(a^3+2ab) &b^2(a^2+b)&a^2b^2& ab^3\\
0 & b(3a^2+2b)& 2ab^2    &2ab^2 & b^3 \\
0 & 2ab       & b^2      & 0    & 0   \\
0 & ab        & 0        & 0    & 0   \\
0 & b         & 0        & 0    & 0   \\
\end{pmatrix}. $$

The values $\widetilde{R}_{j,1}$ ($j=0,1,\ldots,4$)  were also taken into consideration. Thus, 
$\widetilde{R}_{0,1}=\widetilde{R}_1$, 
$\widetilde{R}_{1,1}=4(a^3+2ab)$,
$\widetilde{R}_{2,1}=4(a^2+b)$,
$\widetilde{R}_{3,1}=2a^2$, and
$\widetilde{R}_{4,1}=4a$.

The characteristic equation of $\mathbf{U}$ is 
\begin{equation}
x(x^4-(3a^2b+4b^2)x^3+4b^4x^2+ 3a^2b^5x-b^8)=0,
\end{equation}
which provides relation \eqref{eq:main_color3dun}. Recall $(\widetilde{R}_{n})=(\widetilde{R}_{0,n})$.  The initial values come from the system~\eqref{eq:rec_system3d} for $n=2,3,4,5$, when $j$ and $k$ goes from $0$ to $4$. 
Do not forget that we do not get the tilings with four overhanging bricks of $\mathcal{B}_2$ (last sub-figure of Figure~\ref{fig:unbreakable3d_n2}). That is why finally we add $b^4$ to $\widetilde{R}_{2}$ and relation~\eqref{eq:main_color3dun} holds for $n\geq7$ only (and does not hold for $n=6$).  
\end{proof}

If we tile with bricks exclusively ($a=0$), then we have $\widetilde{R}_{0}=1$, $\widetilde{R}_{1}=2b^2$, $\widetilde{R}_{2}=5 b^4$, and $\widetilde{R}_{n}=4b^{2n}$, when $n\geq3$ (see Figure~\ref{fig:unbreakable3d_bricks}). 

\begin{figure}[!h]
	\centering
	\includegraphics[scale=0.9]{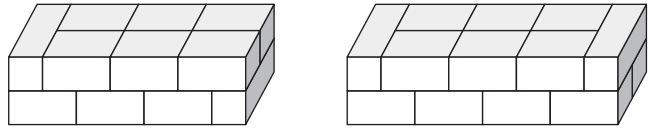}
	\caption{Unbreakable tilings with bricks when $n=7$ and $n=8$}
	\label{fig:unbreakable3d_bricks}
\end{figure}

\section{Some identities}

In the sequel, we give certain identities related to the sequences $(R_n)$ and $(\widetilde{R}_{n})$. The proofs are based on the tilings, not on recursive formulas.

\begin{identity} \label{id:Rn}
	If $n\geq1$, then
   \begin{equation*}	
      R_n=\sum_{i=0}^{n-1}R_i\widetilde{R}_{n-i}.	
   \end{equation*}
\end{identity}
\begin{proof}
As illustrated in Figure \ref{fig:identity_Rn}, let us consider the breakable colored tilings at layer~$i$ ($0\leq i <n$) of board $\mathcal{B}_n$, where the tilings on the right subboard $\mathcal{B}_{n-i}$ are unbreakable. The number of these tilings is $R_i\widetilde{R}_{n-i}$. If $i=0$, then the tilings are unbreakable on the whole $\mathcal{B}_n$. Clearly, when $i$ goes from 1 to $n-1$, we have different tilings and we consider all of them.  
\end{proof}
\begin{figure}[!h]
	\centering
	\includegraphics{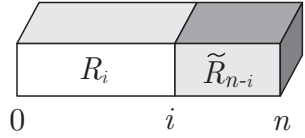}
	\caption{Tilings in case of Identity \ref{id:Rn}}
	\label{fig:identity_Rn}
\end{figure}

Now, we give equivalent formulas for $R_n$.  
\begin{identity} \label{id:Rn2}
  If $n\geq1$, then
	\begin{eqnarray*}	
	R_n&=&\sum_{i=1}^{n}R_{n-i}\widetilde{R}_{i},\\
	R_n&=&\frac12\sum_{i=0}^{n}R_{i}\widetilde{R}_{n-i},\\
	R_n&=&\frac12\sum_{i=0}^{n}R_{n-i}\widetilde{R}_{i}.	
	\end{eqnarray*}
\end{identity}

The next statement gives another rule of summation.
\begin{identity} \label{id:Rnm}
  If $m\geq1$ and $n\geq1$, then
  \begin{equation*}	
  R_{n+m}=R_{n}R_{m}+\sum_{i=1}^{n}\sum_{j=1}^{m}R_{n-i}R_{m-j}\widetilde{R}_{i+j}.	
  \end{equation*}
\end{identity}
\begin{proof}
Let us consider a $\mathcal{B}_{n+m}$ as the concatenation of $\mathcal{B}_{n}$ and $\mathcal{B}_{m}$.	First, we take the breakable tilings at layer~$n$, their cardinality is $R_nR_m$. Then we examine the unbreakable tilings at this layer. We cover the position~$n$ by $(i+j)$-long unbreakable tilings from position~$n-i$ to $n+j$ of $\mathcal{B}_{n+m}$. They give the remaining tilings. Figure~\ref{fig:identity_Rnm} illustrates these two cases.
\end{proof}
\begin{figure}[!h]
	\centering
	\includegraphics{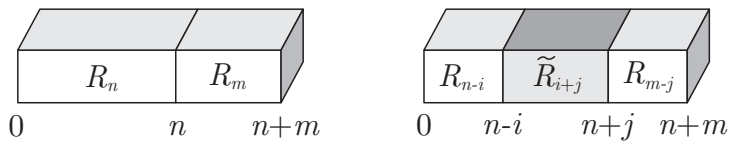}
	\caption{Tilings in case of Identity \ref{id:Rnm}}
	\label{fig:identity_Rnm}
\end{figure}

Identity \ref{id:Rnm} admits the following two remarkable specific cases by the choice of $m=1$ and $m=(k-1)n$, respectively.
\begin{identity} \label{id:Rnm1}
	If  $n\geq1$, then
	\begin{equation*}	
	R_{n+1}=R_{n}R_{1}+\sum_{i=1}^{n}R_{n-i}\widetilde{R}_{i+1}.	
	\end{equation*}
\end{identity}

\begin{identity} \label{id:Rnm2}
	If $n\geq1$  and $k\geq2$, then
	\begin{equation*}	
      R_{kn}=R_{n}R_{(k-1)n}+\sum_{i=1}^{n}\sum_{j=1}^{(k-1)n}R_{n-i}R_{(k-1)n-j}\widetilde{R}_{i+j}.	
	\end{equation*}
\end{identity}

In addition, from Identity \ref{id:Rnm} for a given $k$ ($0\leq k< n$) when we replace $n$ and $m$ by $n-k$ and $n+k$, respectively,  we obtain the following identity.

\begin{identity} \label{id:Rnm3}
	If  $0\leq k<n$, then
	\begin{equation*}	
	R_{2n}=R_{n-k}R_{n+k}+\sum_{i=1}^{n-k}\sum_{j=1}^{n+k}R_{n-k-i}R_{n+k-j}\widetilde{R}_{i+j}.	
	\end{equation*}
\end{identity}

Finally, we give an identity about the sum of the first $n$ terms of the sequence $(R_n)$.

\begin{identity} \label{id:Rsum3d}
  If $n\geq1$, then
	\begin{equation*}	
	\sum_{i=1}^{n}R_{i}= 
	   \sum_{i=1}^{n}\widetilde{R}_{i} \cdot   \sum_{j=0}^{n-i}{R}_{j}.	
	\end{equation*}	
\end{identity}
\begin{proof}
Let us fix $i$ ($1\leq i\leq n$) and consider all the possible boards of which the colored tilings are breakable at layer~$i$, and the tilings of the right subboard $\mathcal{B}_{i}$ are unbreakable as illustrated in Figure \ref{fig:identity_Rsum3d}. Then the sum of such tilings is $\widetilde{R}_{i}(R_0+R_1+\ldots + R_{n-i-1}+R_{n-i})$.  
Certainly, when $i$ goes from 1 to $n$, we have all the different tilings of boards $\mathcal{B}_{k}$ ($1\leq k \leq n$). Recall $\widetilde{R}_{0}=1$. 
\end{proof}

\begin{proof}
	Let us fix $i$ ($1\leq i\leq n$). Consider all the colored tilings of $\mathcal{B}_{j}$ $(i\leq j\leq n)$ which are breakable at position~$(j-i)$ and the tilings of the right subboard $\mathcal{B}_{i}$ (from position~$j-i$ to $j$) are unbreakable as illustrated in Figure \ref{fig:identity_Rsum3d}. 
	Then the sum of such tilings is $\widetilde{R}_{i}(R_0+R_1+\ldots + R_{n-i-1}+R_{n-i})$.  
	Certainly, when $i$ goes from 1 to $n$, we have all the different tilings of boards $\mathcal{B}_{i}$ ($1\leq i \leq n$). Recall $\widetilde{R}_{0}=1$. 
\end{proof}

\begin{figure}[!h]
	\centering
	\includegraphics{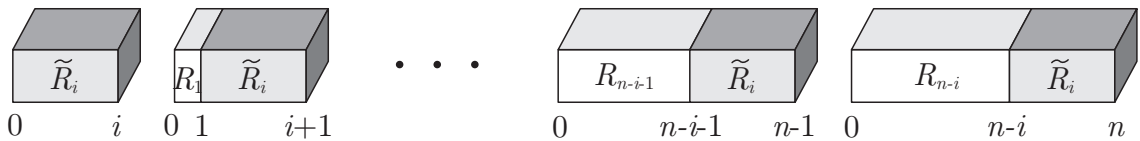}
	\caption{Tilings in case of Identity \ref{id:Rsum3d}}
	\label{fig:identity_Rsum3d}
\end{figure}

\end{document}